\newcommand{\sect}[1]{\section{#1}\setcounter{equation}{0}}
\newcommand{\subsect}[1]{\subsection{#1}}
\font\mbn=msbm10 scaled \magstep1
\font\mbs=msbm7 scaled \magstep1
\font\mbss=msbm5 scaled \magstep1
\def\mbf{\fam\mbff}
\def\Co{{\mbf C}}
\newtheorem{Th}{Theorem}[section]
\newtheorem{Lm}[Th]{Lemma}
\newtheorem{D}[Th]{Definition}
\newtheorem{Proposition}[Th]{Proposition}
\newtheorem*{theorem}{Theorem 1}
\newtheorem*{theo}{Theorem 2}
\begin{document}

\title[Composition Conditions for Classes of Analytic Functions]{Composition Conditions for Classes of Analytic Functions}

\author{Alexander Brudnyi} 
\address{Department of Mathematics and Statistics\newline
\hspace*{1em} University of Calgary\newline
\hspace*{1em} Calgary, Alberta\newline
\hspace*{1em} T2N 1N4}
\email{albru@math.ucalgary.ca}
\keywords{Tree composition condition, composition condition, analytic function, iterated integrals, moments}
\subjclass[2000]{Primary 30D05. Secondary 37L10}

\thanks{Research supported in part by NSERC}

\begin{abstract}
We prove that for classes of analytic functions tree composition condition and composition condition coincide.
\end{abstract}

\date{}

\maketitle

\sect{Main Results}
\subsection{} In the paper we continue to study interrelation between composition condition (CC) and tree composition condition (TCC) for certain classes of continuous functions started in \cite{BY}. These conditions are defined as follows.
\begin{D}\label{def1}
A family $\{f_\alpha\}_{\alpha\in\Lambda}$ of complex continuous functions on an interval $[a,b]\subset\mathbb R$ is said to satisfy a tree composition condition if there exist a tree $T$, a continuous map $h : [a, b]\to T$ with $h(a)=h(b)$ and a family of continuous functions $\{\tilde f_\alpha\}_{\alpha\in\Lambda}$ on $T$ such that
\[
f_\alpha(x)=\tilde f_\alpha(h(x)),\qquad x\in [a,b],\quad \alpha\in\Lambda.
\]

If $T\subset\mathbb R$ the family is said to satisfy a composition condition.
\end{D}
CC and TCC recently appeared in various questions related to the, so-called, center problem for ODEs asking whether the differential equation 
\begin{equation}\label{ode}
v'=\sum_{j=1}^\infty a_j v^j,\qquad a_j\in L^\infty([a,b]),\quad j\in\mathbb N,
\end{equation}
determines a center, i.e., for all sufficiently small initial values the corresponding solutions satisfy $v(a)=v(b)$. In particular, it was proved in \cite{B1}, \cite{B3} that if all finite subsets of the family $\{\int_a^x a_j(s)ds\}_{j\in\mathbb N}$ satisfy TCC, then equation \eqref{ode} determines a center, and, in a certain statistical sense, almost all centers of equations \eqref{ode} are obtained in this way (see also  \cite{AL}, \cite{BlRY}, \cite{BFY}, \cite{BRY}, \cite{B2}, \cite{B4}, \cite{B5}, \cite{C}, \cite{CL}, \cite{FPYZ}, \cite{MP}, \cite{P1}, \cite{PRY}, \cite{P2}, \cite{Z} for other applications of CC). 

For a long time it was unclear why the more stronger CC appears more often than the topologically more adequate TCC. A partial explanation of this fact was given in \cite{BY}: it was shown that CC is indeed stronger
than TCC in general, e.g., for real piecewise-linear functions, but coincides with
it for real entire or rational functions. In this paper we provide a complete explanation of this phenomenon by showing that CC and TCC coincide for classes of real analytic functions (and this is the case of most of known applications of CC in analysis).

\subsection{Vanishing of Iterated Integrals and Composition Conditions}
In this part we recall some results proved in \cite{B1} and \cite{B2}.

Suppose functions $f_j: [a,b]\to\mathbb C$ are Lipschitz with $f_j(a)=0$; by $f_j'\in L^\infty([a,b])$ we denote their derivatives ($1\le j\le n$). Consider a Lipschitz map 
\begin{equation}\label{map}
F=(f_1,\dots, f_n): [a,b]\to\mathbb C^n.
\end{equation}
Let $\widehat\Gamma$ be the polynomially convex hull of $\Gamma:=F([a,b])$, i.e., the set of points $z\in\mathbb C^{n}$ such that if $p$ is any holomorphic polynomial in $n$ variables, 
\[
|p(z)|\leq\max_{x\in \Gamma}|p(x)|.
\]
Then $\widehat\Gamma$ is compact connected and $\widehat\Gamma\setminus\Gamma$ is either empty or a purely one-dimensional complex analytic subvariety of $\mathbb C^n\setminus\Gamma$, see \cite{A}.

In what follows we assume that 
\begin{equation}\label{hull}
\widehat\Gamma=\Gamma.
\end{equation}
This is fulfilled if, e.g.,  $\Gamma$ is a subset of a compact set $K$ in a $C^{1}$ manifold  with no complex tangents such that $\widehat K=K$, see \cite[Thm.~17.1]{AW}. For instance, one can take as such $K$ any compact subset of $\mathbb R^{n}$. 

Also, we assume that 
\begin{equation}\label{triang}
\Gamma\ \text{is Lipschitz triangulable}.
\end{equation}
This means that there exist Lipschitz embeddings $g_{j}:[0,1]\to\mathbb C^{n}$ such that $\Gamma=\cup_{j=1}^{s}\,g_{j}([0,1])$, for $i\neq j$ the intersection $g_{i}([0,1])\cap g_{j}([0,1])$ consists of at most one point and
the inverse maps $g_{j}^{-1}:g_{j}([0,1])\to\mathbb R$ are locally Lipschitz on $g_{j}((0,1))$. In particular, such $\Gamma$ is arcwise connected and locally simply connected and its fundamental group $\pi_1(\Gamma)$ is a free group with finite number of generators.

For example, $\Gamma$ is Lipschitz triangulable if the map $F$ is non-constant analytic.

The next result was established in \cite[Cor.~1.12, Thm.~1.14]{B1}.
\begin{theorem}
The following conditions are equivalent:
\begin{itemize}
\item[(1a)]
The path $F:[a,b]\to\Gamma$ is closed (i.e., $F(a)=F(b)$) and represents the unit element of $\pi_1(\Gamma)$ (i.e., it is contractible in $\Gamma$);
\item[(1b)]
Functions $f_1,\dots, f_n$ satisfy TCC;
\item[(1c)]
For all possible $i_1,\dots, i_k\in\{1,\dots, n\}$ iterated integrals
\[
\int\cdots\int_{a\leq s_{1}\leq\cdots\leq s_{k}\leq b}f'_{i_{k}}(s_{k})\cdots f'_{i_{1}}(s_{1})\ \!ds_{k}\cdots ds_{1}
\]
are equal to zero.
\end{itemize}
\end{theorem}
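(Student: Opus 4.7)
The plan is to establish the three-way equivalence by pairing $(1a)\Leftrightarrow(1b)$ with $(1a)\Leftrightarrow(1c)$, treating the first pair as essentially topological and the second pair as the analytic heart of the theorem.

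The implication $(1b)\Rightarrow(1a)$ is almost formal. After replacing $T$ by $T':=h([a,b])$, which is again a tree (a connected compact subset of a topological tree), the map $\tilde F:=(\tilde f_1,\dots,\tilde f_n):T'\to\mathbb C^n$ satisfies $\tilde F(T')=\Gamma$ and $F=\tilde F\circ h$. Since $T'$ is simply connected and $h$ is closed, $h$ is null-homotopic in $T'$, and pushing the null-homotopy forward by $\tilde F$ contracts $F$ in $\Gamma$. Conversely, for $(1a)\Rightarrow(1b)$, I invoke hypothesis \eqref{triang}: the Lipschitz triangulation presents $\Gamma$ as a finite one-dimensional CW-complex, so $\pi_1(\Gamma)$ is a finitely generated free group and the universal covering $p:\tilde\Gamma\to\Gamma$ is a topological tree. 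Because $F$ is closed and null-homotopic in $\Gamma$, it lifts to a closed continuous path $h:[a,b]\to\tilde\Gamma$; setting $T:=\tilde\Gamma$ and $\tilde f_j:=\pi_j\circ p$, where $\pi_j:\mathbb C^n\to\mathbb C$ is the $j$-th coordinate projection, gives $f_j=\pi_j\circ F=\pi_j\circ p\circ h=\tilde f_j\circ h$, which is TCC.

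The central equivalence is $(1a)\Leftrightarrow(1c)$. My plan is to identify the iterated integrals in (1c) with the coefficients of the Chen expansion of the loop $F$ in the algebra of non-commutative power series in $n$ formal variables corresponding to the one-forms $df_1,\dots,df_n$. By K.-T.~Chen's fundamental theorem this expansion factors through the nilpotent (Magnus) completion of the group algebra $\mathbb C[\pi_1(\Gamma,F(a))]$, and since $\pi_1(\Gamma)$ is a finitely generated free group by \eqref{triang}, it embeds into this completion via the classical Magnus embedding. Consequently $F$ represents the identity in $\pi_1(\Gamma)$ if and only if its Chen expansion is trivial if and only if all iterated integrals in (1c) vanish. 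The polynomial-convexity hypothesis \eqref{hull} is what ensures that $\Gamma$ carries no ``hidden'' loops sitting in some larger analytic hull: it pins $\Gamma$ down to its one-dimensional skeleton so that the coordinate forms $df_j$ genuinely see the free-group generators.

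I expect the principal technical obstacle to be twofold. First, justifying Chen's machinery in the present low-regularity setting is nontrivial: the forms $df_j$ are merely $L^\infty$, so one cannot quote the smooth version verbatim but must reinterpret iterated integrals combinatorially along the edges of the Lipschitz triangulation (where each $f_j$ is Lipschitz with well-defined integrals) and then piece together the resulting words along the graph. Second, one must verify that restriction of the Chen map to the specific forms $df_1,\dots,df_n$ remains injective on $\pi_1(\Gamma)$; this is exactly the point where \eqref{hull} and \eqref{triang} are used together, since the polynomially convex one-dimensional geometry of $\Gamma\subset\mathbb C^n$ forces the coordinate 1-forms to detect a complete set of free generators.
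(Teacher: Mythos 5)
First, a point of calibration: the paper does not actually prove this theorem --- it is imported wholesale from \cite{B1} (Cor.~1.12 and Thm.~1.14 there), so there is no in-paper argument to compare against; I am judging your proposal on its own terms and against the cited proof, which does go through iterated integrals. Your topological half, $(1a)\Leftrightarrow(1b)$, is correct and essentially the standard argument: \eqref{triang} makes $\Gamma$ a finite graph with free $\pi_1$, the universal cover is a tree, a null-homotopic closed path lifts to a closed path there, and conversely a closed path into a tree pushed forward by $\tilde F$ is contractible inside $\tilde F(h([a,b]))=\Gamma$. (You should still verify that $h([a,b])$ is simply connected for the notion of tree used in Definition~\ref{def1}, and that $(1c)$ with $k=1$ already gives $F(a)=F(b)$; both are easy.)

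The genuine gap is in $(1c)\Rightarrow(1a)$, which is the analytic heart of the theorem and which you have labelled an ``obstacle'' rather than resolved. Two ideas are missing. First, the way \eqref{hull} enters is not a vague statement that polynomial convexity ``pins $\Gamma$ down''; it is a concrete approximation theorem: for a compact connected set of finite linear measure with $\widehat\Gamma=\Gamma$, Alexander's results \cite{A} yield $P(\Gamma)=C(\Gamma)$, i.e., uniform polynomial approximation on $\Gamma$. Via the shuffle relations, the integrals in (1c) generate all iterated integrals of forms $p(z)\,dz_j|_\Gamma$ with $p$ polynomial, and by approximation all iterated integrals of continuous $1$-forms on $\Gamma$; only after this step does one have enough functionals to see $\pi_1(\Gamma)$. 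Second, even granting that, your appeal to ``the classical Magnus embedding'' conflates the Magnus map with respect to free generators of $\pi_1(\Gamma)$ with the Chen map attached to the specific forms $dz_j|_\Gamma$; what one actually needs is the inductive statement that vanishing of all iterated integrals of length $\le k$ forces $[F]$ into the $k$-th term of the lower central series (because length-$k$ integrals of enough $1$-forms compute the dual of $\gamma_k/\gamma_{k+1}\otimes\mathbb R$ for a graph), after which residual nilpotence of finitely generated free groups, $\cap_k\gamma_k=\{1\}$, concludes. Without the approximation step the claimed injectivity of the Chen map on $\pi_1(\Gamma)$ is simply unsupported --- for arbitrary $\Gamma$ the coordinate forms need not separate even $H_1(\Gamma)$. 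So your proposal is a correct plan whose decisive implication is asserted, not proved.
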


Weaker homotopic and composition conditions are given by vanishing of the first order moments of $F$, the iterated integrals of the simplest form, see \cite[Cor.~3.11]{B2}.
\begin{theo}
The following conditions are equivalent:
\begin{itemize}
\item[(2a)]
The path $F:[a,b]\to\Gamma$ is closed and represents an element of the commutator subgroup $[\pi_1(\Gamma),\pi_1(\Gamma)]\subset\pi_1(\Gamma)$;
\item[(2b)]
There exists a continuous map $h: [a,b]\to\Gamma_{A}$, $h(a)=h(b)$, where $\Gamma_{A}\stackrel{r}{\rightarrow}\Gamma$ is the regular covering of $\Gamma$ with the deck transformation group \penalty-10000 $\pi_1(\Gamma)/[\pi_1(\Gamma),\pi_1(\Gamma)]\, (=:H_1(\Gamma))$, such that $F=r\circ h$;
\item[(2c)]
For all possible $n_1,\dots , n_k\in\mathbb N$ and $i_1,\dots , i_{k+1}\in \{1,\dots, n\}$
first order moments 
\[ 
\int_a^b(f_{i_1}(s))^{n_1}\cdots (f_{i_k}(s))^{n_k}f'_{i_{k+1}}(s)ds
\]
are equal to zero.
\end{itemize}
\end{theo}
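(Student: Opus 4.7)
The plan is to establish the three-way equivalence by proving $(2a)\Leftrightarrow(2b)$ via covering-space theory and $(2a)\Leftrightarrow(2c)$ via an interplay of de Rham-type duality with polynomial-convexity approximation. For $(2a)\Leftrightarrow(2b)$ I would invoke the standard lifting criterion: since $\Gamma_A\to\Gamma$ is the regular abelian cover with deck group $H_1(\Gamma)=\pi_1(\Gamma)/[\pi_1(\Gamma),\pi_1(\Gamma)]$, a closed path $F$ in $\Gamma$ admits a closed lift $h$ to $\Gamma_A$ precisely when $[F]\in\pi_1(\Gamma)$ lies in the kernel of the Hurewicz map $\pi_1(\Gamma)\to H_1(\Gamma)$, i.e.\ in the commutator subgroup. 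Lipschitz triangulability ensures that $\Gamma$ is semi-locally simply connected so the regular covering and lifting theory apply.

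For $(2a)\Rightarrow(2c)$, I would observe that each first-order moment equals $\int_F\omega$ for the polynomial 1-form $\omega = z_{i_1}^{n_1}\cdots z_{i_k}^{n_k}\,dz_{i_{k+1}}$ on $\mathbb C^n$, pulled back to $\Gamma$ via the inclusion. Since $\Gamma$ is one real dimensional, the pullback is automatically closed (there are no 2-forms on a 1-complex), so $\int_F\omega$ depends only on the homology class $[F]\in H_1(\Gamma,\mathbb C)$. A commutator in $\pi_1(\Gamma)$ represents $0$ in $H_1(\Gamma)$, so every such integral vanishes.

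The substantive direction is $(2c)\Rightarrow(2a)$. Closedness of $F$ is immediate: taking $k=0$ and varying $i_{k+1}$ gives $f_j(b)-f_j(a)=f_j(b)=0$ for every $j$. To place $[F]$ in the commutator subgroup it suffices to show $[F]=0$ in $H_1(\Gamma,\mathbb C)$, and by the natural pairing with $H^1(\Gamma,\mathbb C)$ this reduces to verifying $\langle[\eta],[F]\rangle = 0$ for every class $[\eta]$. My strategy is to show that the classes $\{[P(z)\,dz_j]\}$ of polynomial 1-forms already span the finite-dimensional space $H^1(\Gamma,\mathbb C)$, so that hypothesis (2c) simultaneously kills all pairings $\langle[\eta],[F]\rangle$.

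The main obstacle is precisely this spanning claim, and it is where the hypotheses $\widehat\Gamma=\Gamma$ and Lipschitz triangulability do essential work. Topology alone is insufficient: in $\mathbb C^1$ every polynomial 1-form $z^m\,dz$ is exact, so polynomial 1-forms can span $H^1(\Gamma,\mathbb C)$ only because $n\ge 2$ and the polynomial hull of $\Gamma$ contains no analytic disks. I would combine the Oka--Weil theorem (uniform approximation of holomorphic functions on a neighborhood of the polynomially convex $\Gamma$ by polynomials) with a Mergelyan-type density statement for uniform algebras on 1-dimensional polynomially convex sets, which under $\widehat\Gamma = \Gamma$ forces $P(\Gamma) = C(\Gamma)$, to approximate cocycles dual to a basis of $H_1(\Gamma)$ by polynomial 1-forms. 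Such a basis is obtained from a spanning tree of the Lipschitz triangulation, whose non-tree edges give independent generating loops. Since $H^1(\Gamma,\mathbb C)$ is finite dimensional, uniform density of approximations then upgrades to actual spanning, completing $(2c)\Rightarrow(2a)$.
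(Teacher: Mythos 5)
First, a point of reference: the paper does not prove this statement at all --- Theorem 2 is quoted from \cite[Cor.~3.11]{B2} --- so there is no in-paper argument to compare yours against line by line. On its merits, your outline has the right architecture and isolates the genuinely hard point correctly. The equivalence $(2a)\Leftrightarrow(2b)$ is indeed the standard lifting criterion (Lipschitz triangulability makes $\Gamma$ locally simply connected, so covering theory applies, and the lift of the closed path $F$ closes up iff $[F]$ dies in the deck group $H_1(\Gamma)$). The identification of the first-order moments with $\int_F\omega$ for the polynomial $1$-forms $\omega=z_{i_1}^{n_1}\cdots z_{i_k}^{n_k}\,dz_{i_{k+1}}$ is right, and you correctly see that $\widehat\Gamma=\Gamma$ is what saves the day: for $\Gamma=\{|z_1|=1\}\times\{0\}\subset\mathbb C^2$ every polynomial $1$-form has vanishing periods although $H_1(\Gamma)\neq 0$, and this is exactly the configuration excluded by polynomial convexity.

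Two steps are asserted rather than proved, and they carry most of the weight. (i) In $(2a)\Rightarrow(2c)$, the claim that $\int_F\omega$ depends only on $[F]\in H_1(\Gamma)$ is not automatic for a Lipschitz path in a singular $1$-complex (``no $2$-forms on a $1$-complex'' is a heuristic, not a proof): you must reduce $F$ to an edge path. This is precisely where the hypothesis that the inverses $g_j^{-1}$ are locally Lipschitz enters --- on each closed edge the pulled-back form has a continuous primitive, so back-and-forth travel cancels and $\int_F\omega=\sum_e n_e\int_e\omega$ with $(n_e)$ the simplicial cycle representing $[F]$. (ii) The spanning claim needs an actual construction. One way to close it: fix a spanning tree and let $c_1,\dots,c_d$ be the cycles through the non-tree edges $e_1,\dots,e_d$; on each $e_i$ some coordinate $z_{j(i)}$ is non-constant, so a sub-arc $\sigma_i$ in the interior of $e_i$ has $\int_{\sigma_i}dz_{j(i)}\neq 0$; choose $g_i\in C(\Gamma)$ close to the indicator of $\sigma_i$ and supported inside $e_i$, so that $\int_{c_k}g_i\,dz_{j(i)}$ is nonzero for $k=i$ and zero otherwise; then the Alexander--Wermer result you allude to ($\widehat\Gamma=\Gamma$ plus finite linear measure forces $P(\Gamma)=C(\Gamma)$, see \cite{A}) replaces each $g_i$ by a polynomial, and since the $c_k$ are rectifiable the periods converge, so a sufficiently good approximation makes the period matrix invertible. (Alternatively, Arens--Royden represents each class of $H^1(\Gamma,\mathbb Z)$ by $\frac{1}{2\pi i}\,dQ/Q$ with $Q$ a polynomial zero-free near $\Gamma$, and Oka--Weil approximation of $1/Q$ yields polynomial forms $R\,dQ$ with essentially the same periods.) Finally, your reduction of $[F]\in[\pi_1(\Gamma),\pi_1(\Gamma)]$ to $[F]=0$ in $H_1(\Gamma,\mathbb C)$ tacitly uses that $H_1(\Gamma,\mathbb Z)$ is torsion-free; this holds because $\Gamma$ is homotopy equivalent to a finite wedge of circles, and is worth saying. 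With these points filled in, the argument is correct.
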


\subsection{Composition Conditions for Analytic Functions}
One can easily show that conditions of Theorems 1 and 2 are not equivalent for generic $f_1,\dots, f_n$. Strikingly, they are equivalent if the functions are analytic.

\begin{Th}\label{compos}
Let ${\mathcal F}$ be a family of complex analytic functions defined in open neighbourhoods of a compact interval $[a,b]$ whose finite subsets satisfy conditions \eqref{hull} and \eqref{triang} . Then ${\mathcal F}$ satisfies CC if and only if each of its finite subsets satisfies one of the conditions (1a)-(1c) or (2a)-(2c).
\end{Th}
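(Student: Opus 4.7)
The plan is to organize the argument around the chain of easy implications CC $\Rightarrow$ TCC $\Leftrightarrow$ (1a)--(1c) $\Rightarrow$ (2a) $\Leftrightarrow$ (2b) $\Leftrightarrow$ (2c) (the first from $T=h([a,b])\subset\mathbb R$; the second from Theorem 1; the third because the trivial element lies in the commutator subgroup; the last from Theorem 2) and then to establish the single non-trivial direction: if every finite subset of $\mathcal F$ satisfies (2a), then $\mathcal F$ satisfies CC. I would first fix a finite subset $\{f_1,\ldots,f_n\}\subset\mathcal F$ and set $F=(f_1,\ldots,f_n)$, $\Gamma=F([a,b])$, noting that $F$ extends holomorphically to a complex neighborhood of $[a,b]$ and that $\pi_1(\Gamma)$ is free of finite rank by \eqref{hull}, \eqref{triang}.

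The heart of the argument will be to promote (2a) to (1a), upgrading null-homology of the analytic loop $F$ in $\Gamma$ to null-homotopy. I plan to use the analyticity of $F$ as follows: the critical set of $F$ on $[a,b]$ is finite, on the complementary subintervals $F$ is an immersion tracing an analytic arc, and at every preimage of a self-intersection vertex $v\in\Gamma$ the germ of $F$ is a single irreducible analytic branch of $\Gamma$ through $v$. Hence every smooth passage of $F$ at $v$ pairs precisely two half-edges of $\Gamma$ (those of one branch), and every critical visit is a bounce on one half-edge of a single branch. The word traced by $F$ in $\pi_1(\Gamma,F(a))$ is therefore a walk in an auxiliary graph encoding the branch pairings at all vertices; I would conclude by a combinatorial cancellation argument that such a constrained walk with trivial abelianization must freely reduce to the identity.

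Once (1a) is in hand, passing to CC for a finite subset amounts to an analytic refinement of the argument of \cite{BY} for entire/rational families: exploiting the null-homotopy of $F$ together with the analytic structure of its fibres, I expect to construct a continuous (piecewise analytic) $h:[a,b]\to\mathbb R$ with $h(a)=h(b)$ whose fibres coincide with those of $F$, through which each $f_j$ automatically factors. Finally, to pass from CC for every finite subset to CC for the whole $\mathcal F$, one can use the noetherian property of analytic subsets of $[a,b]^2$: the equivalence $x\sim_{\mathcal F} y\iff f(x)=f(y)$ for every $f\in\mathcal F$ is cut out by finitely many $f_\alpha$, so the CC produced for a sufficiently large finite $S_0\subset\mathcal F$ works for all of $\mathcal F$.

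The principal obstacle will be the promotion step (2a) $\Rightarrow$ (1a). Whenever $\mathrm{rank}\,\pi_1(\Gamma)\ge 2$ the commutator subgroup contains non-trivial elements such as $aba^{-1}b^{-1}$, so without some rigidity null-homology would be strictly weaker than null-homotopy. The local irreducibility of analytic germs at the self-intersection points of $\Gamma$ must be the decisive input, and the main technical work will lie in converting this local constraint into a sharp global statement forbidding the braided traversals that a non-trivial commutator would require.
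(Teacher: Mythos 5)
Your skeleton (reduce everything to the single implication ``(2a) for finite subsets $\Rightarrow$ CC'' via Theorems 1 and 2) matches the paper, and your local observation --- that at each self-intersection of $\Gamma$ an analytic passage of $F$ is confined to a single irreducible branch --- is indeed where analyticity enters. But the step you defer to ``a combinatorial cancellation argument'' is the entire theorem, and the mechanism you name for it (the constrained walk ``must freely reduce to the identity'') cannot work as stated: a null-homologous word such as $aba^{-1}b^{-1}$ in a figure-eight contains no backtrack, so free reduction does nothing, and null-homology can only be exploited through a degree count, not through local cancellation. What is actually needed, and what the paper proves as Theorem \ref{te1}, is a \emph{global} structure statement: after normalizing the image analytic curve and uniformizing its universal cover, $F$ factors as $G\circ h$ where $h$ maps $[a,b]$ into an arc or into $\mathbb S$ and $G$ is injective off a finite set. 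Only with this in hand does null-homology force triviality: when $h(a)=h(b)$ one shows $G_*$ of the generator of $H_1(\mathbb S)$ cannot vanish because $G$ is generically injective (otherwise some open arc of $\Gamma$ would have at least two preimage arcs), so the winding number of $h$ is $0$ and $h$ lifts to $\mathbb R$ --- which already \emph{is} CC, with no separate ``TCC $\Rightarrow$ CC'' step to import from \cite{BY}. Your plan also omits entirely the case $h(a)\neq h(b)$, where the walk closes up in $\Gamma$ but its two endpoints sit on different local branches; the paper excludes this by a separate figure-eight computation in $H_1$, and your ``closed constrained walk'' formalism silently assumes the endpoints already match in the normalization, which is precisely what must be proved.

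Your local branch data can, in principle, be reassembled into the needed global statement (separating the branches at every vertex yields a $1$-manifold, connected because $F([a,b])=\Gamma$, hence a single arc or circle mapping onto $\Gamma$ generically injectively), so the route is salvageable --- but then you are reproving Theorem \ref{te1} and must still supply the degree argument and the mismatched-endpoint case. One genuinely different and attractive piece of your plan is the passage from finite subsets to all of $\mathcal F$: using noetherianity of analytic germs along $[a,b]^2$ to find a finite $S_0\subset\mathcal F$ with $\bigcap_{f\in S_0}\{f(x)=f(y)\}=\bigcap_{f\in\mathcal F}\{f(x)=f(y)\}$, so that the composition factor $h_0$ built for $S_0$ works for every $f\in\mathcal F$ (each fibre of $h_0$ lies in a fibre of $F_{S_0}$, hence in a fibre of every $f$, and $h_0$ is a quotient map onto its compact image). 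This is cleaner than the paper's argument with nets of critical loci and their stabilization, and it is correct provided the coherence argument for the infinite intersection is set up carefully on a common complex neighbourhood of $[a,b]^2$.
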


The proof of the result is based on the following

\begin{Th}\label{te1}
Let $F: U\to\mathbb C^n$ be a nonconstant holomorphic map defined in a domain $U$ containing a compact interval $I\subset\mathbb R$ such that $F(I)\subset\mathbb R^n$. 
There exist nonconstant holomorphic maps $h: D\to\mathbb C$ of a domain $D\Subset U$ containing $I$ and $G:h(D)\to\mathbb C^n$ such that
\begin{itemize}
\item[(1)]
$F|_D=G\circ h$;
\item[(2)]
$h(I)\subset\mathbb S:=\{z\in\mathbb C\, ;\, |z|=1\}$;
\item[(3)]
$G$ is injective outside a finite subset of $h(D)$.
\end{itemize}
\medskip

If also $F|_{D_1}=G_1\circ h_1$ for some holomorphic maps $h_1:D_1\to\mathbb C$ of a domain $D_1\Subset U$ containing $I$ and $G_1:h_1(D_1)\to\mathbb C^n$, then there exists a holomorphic map $q: h_1(D\cap D_1)\to h(D)$ such that $h=q\circ h_1$ on $D\cap D_1$ and
$G_1=G\circ q$ on $h_1(D\cap D_1)$.
\end{Th}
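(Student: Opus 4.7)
The plan is to realise $(h,G)$ as a uniformisation of the normalisation of the image curve of $F$, and to exploit the reflection symmetry $F(\bar z)=\overline{F(z)}$ to arrange that $h(I)\subset\mathbb S$.

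First I shrink $U$ to a simply connected $\mathbb R$-symmetric neighbourhood $V\Subset U$ of $I$. Since $F(I)\subset\mathbb R^n$, Schwarz reflection gives $F(\bar z)=\overline{F(z)}$ on $V$, so complex conjugation on $\mathbb C^n$ restricts to an anti-holomorphic involution of the one-dimensional analytic set $\Gamma:=F(V)\subset\mathbb C^n$. Let $\nu:\tilde\Gamma\to\Gamma$ be its normalisation, a Riemann surface with finite proper holomorphic projection that is injective off the (finite) singular locus of $\Gamma$. Using simple connectedness of $V$, the map $F$ lifts uniquely to a holomorphic $\tilde F:V\to\tilde\Gamma$ with $F=\nu\circ\tilde F$, and the anti-holomorphic involution lifts to one $\tau$ of $\tilde\Gamma$ whose fixed locus contains $\tilde F(I)$.

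Next I uniformise a $\tau$-invariant tubular neighbourhood $W$ of $\tilde F(I)$ inside $\tilde\Gamma$. Topologically $W$ is a disc or an annulus, according as $\tilde F(a)\neq\tilde F(b)$ or $\tilde F(a)=\tilde F(b)$, and in either case $W$ is a planar Riemann surface. In the annular case, the classification of doubly-connected Riemann surfaces carrying an anti-holomorphic involution with one-dimensional fixed locus yields a biholomorphism $\phi:W\to\{r^{-1}<|w|<r\}$ intertwining $\tau$ with $w\mapsto 1/\bar w$, whose fixed locus is exactly $\mathbb S$; hence $\phi(\tilde F(I))\subset\mathbb S$. In the disc case I first uniformise $W$ to a bounded symmetric subdomain of $\mathbb C$ with $\tau$ becoming conjugation, so that $\tilde F(I)$ lies in a short real sub-segment; composing with $w\mapsto e^{iw}$ (injective on a bounded horizontal strip of width $<2\pi$) then produces an injective holomorphic $\phi:W\hookrightarrow\mathbb C$ sending $\tilde F(I)$ into an arc of $\mathbb S$. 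Setting $D:=\tilde F^{-1}(W)$, $h:=\phi\circ\tilde F$, and $G:=\nu\circ\phi^{-1}$ on $h(D)$ verifies (1), since $F|_D=\nu\circ\tilde F=G\circ h$; (2) by construction; and (3), because $\phi^{-1}$ is a biholomorphism and $\nu$ is injective off the finite singular locus.

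For the uniqueness clause I define $q(h_1(z)):=h(z)$ for $z\in D\cap D_1$. If $h_1(z)=h_1(z')$ then $F(z)=G_1(h_1(z))=G_1(h_1(z'))=F(z')$, so $G(h(z))=G(h(z'))$; injectivity of $G$ off a finite subset then forces $h(z)=h(z')$ except possibly when $h(z)$ or $h(z')$ lies in that exceptional set. Thus $q$ is well defined and holomorphic on an open dense subset of $h_1(D\cap D_1)$; being bounded into the relatively compact set $h(D)$, it extends holomorphically across the discrete remainder by Riemann's removable singularity theorem, and the identities $h=q\circ h_1$ and $G_1=G\circ q$ then hold throughout by continuity from the dense open set on which they hold by construction. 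The main technical obstacle will be the uniformisation step in the disc case, since the natural real structure carries $\tilde F(I)$ to a real segment rather than onto $\mathbb S$, forcing the introduction of the exponential and a simultaneous shrinking of $W$ (and hence $D$) that preserves injectivity while retaining $I\subset D\Subset U$; a subsidiary point is handling critical points of $F$ on $I$, which are isolated and for which $\tilde F$ is well defined through the normalisation by standard Puiseux-type arguments.
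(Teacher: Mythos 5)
Your overall strategy (normalize the image curve, use the antiholomorphic involution coming from $F(\bar z)=\overline{F(z)}$, uniformize a neighbourhood of the image of $I$ so that it lands in $\mathbb S$, and obtain $q$ as $G^{-1}\circ G_1$ extended by Riemann's removable singularity theorem) is the same as the paper's, and your treatment of the second clause is essentially correct. But there is a genuine gap at the uniformization step. You assert that a $\tau$-invariant tubular neighbourhood $W$ of $K:=\tilde F(I)$ is ``a disc or an annulus, according as $\tilde F(a)\neq\tilde F(b)$ or $\tilde F(a)=\tilde F(b)$.'' Neither half of this is justified, and the criterion is wrong. A priori $K$ is only a compact connected subanalytic set, i.e.\ a finite graph: the path $t\mapsto \tilde F(t)$ may traverse several independent loops of the Riemann surface, in which case $W$ retracts onto a wedge of circles and is a planar domain with three or more boundary components, which is neither a disc nor an annulus and carries no biholomorphism onto $\{r^{-1}<|w|<r\}$. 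Moreover the dichotomy is governed not by whether $\tilde F(a)=\tilde F(b)$ but by whether the image of $\pi_1(K)$ in $\pi_1$ of the normalized curve is trivial: a closed path can be contractible (disc case), and a non-closed path can wind around a loop (annulus case). The paper closes exactly this gap with the one genuinely non-obvious argument of the proof: lifting to the universal cover $\mathbb D$, it shows that the lifted involution is complex conjugation and that \emph{every} path in $K$ issuing from $\tilde F(a)$ lifts into the fixed locus $(-1,1)$; hence the image of $\pi_1(K)$ acts freely and discretely on $(-1,1)$ by M\"obius transformations and is therefore trivial or infinite cyclic, which is what forces $K$ to be an arc or a circle and $W$ (or rather the relevant intermediate cover) to be a disc or an annulus with the involution $w\mapsto 1/\bar w$. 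Without some version of this argument your case analysis does not cover all configurations.

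A secondary, smaller gap: you take for granted that $\Gamma:=F(V)$ is a one-dimensional analytic set with finite singular locus admitting a normalization. The image of an open set under a holomorphic map into $\mathbb C^n$ is not in general an analytic subset of a neighbourhood of itself; the paper devotes Proposition~2.1 to producing a one-dimensional complex space $X\subset\mathbb C^n$ and a smaller conjugation-invariant domain $D$ with $F(D)\Subset X$ (via Remmert's proper mapping theorem applied to carefully chosen pieces), and the relative compactness $F(D)\Subset X$ is what makes the singular locus met by $F(D)$ finite, which you need for item (3). This can be repaired along those lines, but as written it is asserted rather than proved.
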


\sect{Proof of Theorem \ref{te1}}
\subsect{} In this subsection we prove some preparatory results and in the next one apply them to prove the theorem.

In what follows for a set $K\subset\mathbb C^k$ by ${\rm cl}(K)$ and $\partial K$ we denote its closure and boundary. By $\,\bar{\,}:\mathbb C^k\to\mathbb C^k$ we denote the involution mapping each coordinate $z_i$ on $\Co^k$ to its conjugate $\bar{z_i}$, $1\le i\le k$.

\begin{Proposition}\label{pr1}
Suppose $F:=(f_1,\dots, f_n): U\to\mathbb C^n$ is a nonconstant holomorphic map defined in an open neighbourhood $U\subset\mathbb C$ of $[a,b]\Subset\mathbb R$. There exist an irreducible one-dimensional complex space $X\subset\mathbb C^n$ and a domain $D\Subset U$ invariant under conjugation and containing $[a,b]$ such that $F(D)\Subset X$. If, in addition, all $f_j|_{U\cap \mathbb R}$ are real-valued, then $\bar{X}=X$.
\end{Proposition}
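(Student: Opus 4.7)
My plan is to realize $X$ as the irreducible component through $F(D)$ of a pure one-dimensional analytic subvariety $Y$ of an open neighborhood $\Omega\subset\mathbb{C}^n$ of the compact set $F(\mathrm{cl}(D))$. I would produce the analytic structure on $Y$ by Remmert's proper mapping theorem, applied after arranging that $F$ restricts to a proper holomorphic map from (the relevant part of) a slightly larger conjugation-invariant domain $D^*\Supset\mathrm{cl}(D)$ into $\Omega$.

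First I would replace $U$ by a conjugation-invariant connected open subset still containing $[a,b]$, which is harmless since $[a,b]\subset\mathbb{R}$. Next, choose a conjugation-invariant open set $D\Subset U$ with real-analytic boundary containing $[a,b]$---for instance, the interior of a thin ellipse about $[a,b]$---and consider a one-parameter family $D^*(t)$ of conjugation-invariant domains widening $D$ by $t>0$. The set $E:=F^{-1}(F(\mathrm{cl}(D)))\subset U$ is sub-analytic, as the preimage of a (relatively compact) sub-analytic set under the real-analytic map $F$; hence the values of $t$ for which $\partial D^*(t)\cap E\neq\emptyset$ form a sub-analytic, and therefore (in dimension one) locally finite union of intervals and points in $\mathbb{R}$. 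I would then fix $t>0$ small enough that $\mathrm{cl}(D^*(t))\Subset U$ while $\partial D^*(t)\cap E=\emptyset$---equivalently $F(\partial D^*)\cap F(\mathrm{cl}(D))=\emptyset$, where $D^*:=D^*(t)$.

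Set $\Omega:=\mathbb{C}^n\setminus F(\partial D^*)$, an open neighborhood of $F(\mathrm{cl}(D))$. The restriction $F\colon D^*\cap F^{-1}(\Omega)\to\Omega$ is proper: any sequence $(z_n)$ in the source with $F(z_n)\to w\in\Omega$ has accumulation points only in $D^*$, since an accumulation point on $\partial D^*$ would force $w\in F(\partial D^*)$, contradicting $w\in\Omega$. Remmert's proper mapping theorem then yields that $Y:=F(D^*)\cap\Omega=F(D^*\cap F^{-1}(\Omega))$ is an analytic subvariety of $\Omega$, of pure dimension one since $F$ is non-constant with discrete fibers from a one-dimensional complex manifold. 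By local finiteness of irreducible components, the connected set $F(D)$ lies in a unique irreducible component $X$ of $Y$. Since $X$ is closed in $\Omega$ and $F(\mathrm{cl}(D))=\mathrm{cl}_{\mathbb{C}^n}(F(D))\subset\Omega$, one obtains $F(\mathrm{cl}(D))\subset X$, and hence $F(D)\Subset X$.

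For the final assertion, suppose $F|_{U\cap\mathbb{R}}$ is $\mathbb{R}^n$-valued. Then the holomorphic maps $z\mapsto F(z)$ and $z\mapsto\overline{F(\bar z)}$ agree on $[a,b]$, and so on all of $U$ by the identity theorem; that is, $F(\bar z)=\overline{F(z)}$ on $U$. Since $D$, $D^*$, and thus $\Omega$ are all conjugation-invariant, the variety $Y$ satisfies $\bar Y=Y$ under the coordinatewise conjugation on $\mathbb{C}^n$. Then $\bar X$ is also an irreducible component of $Y$ containing the conjugation-invariant connected set $F(D)$, so $\bar X=X$ by uniqueness. The main technical obstacle is the sub-analyticity step producing $D^*$ with $\partial D^*\cap E=\emptyset$, which is needed to get local finiteness of the preimages of $F(\mathrm{cl}(D))$ away from $\partial D^*$; once this is arranged, the rest is a routine combination of Remmert's theorem with the standard local structure of irreducible components of analytic subvarieties.
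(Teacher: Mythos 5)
Your reduction to Remmert's proper mapping theorem hinges on producing a domain $D^*\Supset\mathrm{cl}(D)$ with $\mathrm{cl}(D^*)\Subset U$ and $F(\partial D^*)\cap F(\mathrm{cl}(D))=\emptyset$, and this is exactly where the argument breaks: such a $D^*$ need not exist. Take $n=2$, $F(z)=(z^2,z^4)$, $[a,b]=[-1,2]$, $U$ a thin neighbourhood of $[-1,2]$, and $D$ a thin ellipse with $\mathrm{cl}(D)\cap\mathbb{R}=[-1-\delta,\,2+\delta]$. Since $F(w)=F(z)$ iff $w=\pm z$, one has $F^{-1}(F(\mathrm{cl}(D)))=(\mathrm{cl}(D)\cup(-\mathrm{cl}(D)))\cap U$, and this set contains the real segment running from $-1-\delta\in\partial D$ leftward all the way to $\partial U$ (namely $U\cap[-(2+\delta),-1-\delta]$). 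Any admissible $D^*$ contains $-1-\delta$ and is relatively compact in $U$, so its boundary must cross this connected protruding piece of $F^{-1}(F(\mathrm{cl}(D)))$; hence $F(\partial D^*)\cap F(\mathrm{cl}(D))\neq\emptyset$ for \emph{every} choice of $D^*$. Your sub-analyticity observation only yields that the bad set of parameters $t$ is a locally finite union of points and intervals; it does not exclude that this set contains a full interval $(0,\delta_0]$, and in the example it does. Without $F(\mathrm{cl}(D))\subset\Omega$, the variety $Y$ is not defined along all of $\mathrm{cl}_{\mathbb{C}^n}(F(D))$, so the conclusion $F(D)\Subset X$ (which needs $F(\mathrm{cl}(D))\subset X$) collapses.

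This failure is precisely what the paper's proof is organized to circumvent. There one fixes $E\Subset U$ with analytic boundary, accepts that $\Gamma:=F(\partial E)$ may meet $F([a,b])$ in finitely many points $x_1,\dots,x_p$, applies the proper mapping theorem to $E\setminus F^{-1}(\Gamma)\to\mathbb{C}^n\setminus\Gamma$ to obtain a subvariety $Y$, and then separately constructs, around each preimage $z_{j\ell}$ of each $x_j$, a coordinate neighbourhood $U'_{j\ell}$ on which $f_1$ is a proper power map; the local images $Z_{j\ell}=F(U'_{j\ell}\cap E)$ are glued to $Y$, and the union $W=Y\cup Z$ is then checked pointwise to be a one-dimensional complex space --- no longer a subvariety of any single open subset of $\mathbb{C}^n$, which is why the statement asserts only that $X$ is a complex space. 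Your properness argument, the use of Remmert's theorem, and the treatment of the real case via the identity $F(\bar z)=\overline{F(z)}$ all match the paper in spirit and are correct as far as they go, but the patching of $Y$ by the local pieces $Z_{j\ell}$ is the essential content of the proof and cannot be bypassed by shrinking or enlarging the domain.
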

\begin{proof}
Since the set of critical points of $F$ is discrete, we can find a domain $E\Subset U$ containing $[a,b]$ such that $\bar{E}=E$ and $\partial E$ is analytic and does not contain critical points of $F$. Then $\Gamma:=F(\partial E)\subset\mathbb C^n$ is a closed analytic curve whose singular points may be only points of self-intersection. By the definition $S:=F^{-1}(\Gamma)\subset U$ is an analytic subset of real dimension $1$. Thus, $E\setminus S$ is disjoint union of domains $V_1,\dots, V_s$. By the Remmert proper mapping theorem, see \cite{RS}, $Y:=\cup_{j} Y_j$, $Y_j:=F(V_j)$, $1\le j\le s$, is a one-dimensional complex subvariety of $\mathbb C^n\setminus\Gamma$ and $Y_j$ are its irreducible components.  

Let $F\bigl(S\cap [a,b]\bigr):=\{x_1,\dots, x_p\}$. We set 
$$
F^{-1}(x_j)\cap {\rm cl}(E):=\{z_{j1},\dots, z_{jk_j}\},\qquad 1\le j\le p.  
$$

Without loss of generality we may assume that $f_1$ is nonconstant. Then there exist a number $r>0$ and, for each $1\le\ell\le k_j$, $1\le j\le p$, open disks $D_{r}(f_1(z_{j1}))\Subset\mathbb C$ centered at $f_1(z_{j1})$ of radius $r$ and open simply connected neighbourhoods $U_{j\ell}\Subset U$ of $z_{j\ell}$ with analytic boundaries such that
\begin{itemize}
\item[(1)] All $U_{j\ell}$ are mutually disjoint;
\item[(2)] There exists a holomorphic coordinate $w_{j\ell}$ on $U_{j\ell}$ such that 
$w_{j\ell}(z_{j\ell})=0$, 
$f_1|_{U_{j\ell}}=f_1(z_{j1})+w_{j\ell}^{d_{j\ell}}$ for some $d_{j\ell}\in\mathbb N$ and $U_{j\ell}:=\{z\in\mathbb C\, ,\, |w_{j\ell}(z)|< r^{1/d_{j\ell}}\}$ (hence, $f_1|_{U_{j\ell}}$ maps $U_{j\ell}$ properly onto $D_{r_j}(f_1(z_{j1}))$).
\end{itemize}
Diminishing all $U_{j\ell}$, if necessary, we may assume in addition that 
\begin{itemize}
\item[(3)] 
There exists a domain $U'\Subset U$ such that

${\rm cl}(E)\cup\bigl(\cup_{j,\ell}\,U_{j\ell}\bigr)\Subset U'$ and $F^{-1}\bigl(F\bigl(\cup_{j,\ell}\,U_{j\ell}\bigr)\bigr)\cap U'=\cup_{j,\ell}\,U_{j\ell}$;
\end{itemize}
\begin{itemize}
\item[(4)]
If $z_{j\ell}\in E$, then $F|_{U_{j\ell}}: U_{j\ell}\to\mathbb C^n$ is a (nonproper) embedding (recall that $\partial E$ does not contain critical points of $F$);
\item[(5)]
If $z_{j\ell}\not\in \partial E$, then $U_{j\ell}\Subset E$.
\end{itemize}
If $F$ maps $U\cap\mathbb R$ into $\mathbb R^n$, then we may choose $U_{j\ell}$ so that
\begin{itemize}
\item[(6)]
The set $\sqcup_{j,\ell}\,U_{j\ell}$ is invariant under conjugation.
\end{itemize}

Property (3) implies that there exist open neighbourhoods $N_j$ of points $x_j\in\mathbb C^n$, $1\le j\le p$, such that
$$
F^{-1}(N_j)\cap U'\Subset \cup_{1\le\ell\le k_j} U_{j\ell}.
$$
Thus we can find some $0<r_*<r$ such that $U_{j\ell}':=\{z\in U_{j\ell}\, ,\, |w_{j\ell}(z)|< r_*^{1/d_{j\ell}}\}\Subset
F^{-1}(N_j)$, $1\le\ell\le k_j$, and if $F|_{U\cap \mathbb R}$ is real-valued, then the set $\sqcup_{j,\ell}\,U_{j\ell}'$ is invariant under conjugation. (Also, $f_1|_{U_{j\ell}'}$ maps $U_{j\ell}'$ properly onto $D_{r_*}(f_1(z_{j1}))$.)

We set
$$
C:=\cup_{j,\ell}\,F(\partial U_{j\ell}').
$$
Then 
$$
F^{-1}(C)\cap U'=\cup_{j,\ell}\,\partial U_{j\ell}'.
$$

Indeed, by our definition $F^{-1}(C)\cap U'\subset F^{-1}(\cup_j N_j)\cap U'$. Thus each $z\in F^{-1}(C)$ belongs to some $U_{j\ell}$. If $z\not\in \partial U_{j\ell}'$, then $|f_1(z)-f_1(z_{j1})|\ne r_*$. However, since $F(z)\in C$, we must have $|f_1(z)-f_1(z_{j1})|= r_*$, a contradiction showing that $z\in \partial U_{j\ell}'$, as required. 

Now, by the proper mapping theorem $Z':=\cup_{j,\ell}\,Z_{j\ell}'$, $Z_{j\ell}':=F(U_{j\ell}')$, is a one-dimensional complex subvariety of $\mathbb C^n\setminus C$ with irreducible components $Z_{j\ell}'$.

We set $Z_{j\ell}:=F(U_{j\ell}'\cap E)$. According to properties (4) and (5), $Z_{j\ell}$ is an open subset of $Z_{j\ell}'$. 
Next, we define $Z:=\cup_{j,\ell}\,Z_{j\ell}$ and
$$
W:=Y\cup Z
$$
and show that $W\subset\mathbb C^n$ is a one-dimensional complex space.

First, suppose that $z\in Y$. Since the latter is a complex subvariety of $\mathbb C^n\setminus\Gamma$, there exists an open neighbourhood $V$ of $z$ in $\mathbb C^n\setminus\Gamma$ and a family of holomorphic functions $g_1,\dots, g_p$ on $V$ such that
$$
Y\cap V=\{v\in V\, ;\, g_1(v)=\cdots=g_p(v)=0\}.
$$
If $Y\cap V=W\cap V$, then $W\cap V$ is a complex subvariety of  $V$. For otherwise each
open neighbourhood $O\Subset V$ of $z$ contains points of $Z\setminus Y$. Choose such neighbourhoods $\{O_i\}_{i\ge 1}$ so that $\cap_{i=1}^\infty O_i=\{z\}$ and let 
$x_i\in (Z\setminus Y)\cap O_i$, $i\ge 1$. Passing to a subsequence of the sequence $\{x_i\}$, if necessary, we may assume without loss of generality that there exist indices $j,\ell$ and points $y_i\in U_{j\ell}'\cap E$ such that $F(y_i)=x_i$, $i\ge 1$, and $\{y_i\}_{i\ge 1}$ converges to a point $y\in {\rm cl}(U_{j\ell}')\cap {\rm cl}(E)$. By definition $F(y)=z\in Y\cap V$; hence, $y\in E\setminus S$.  Then, by the continuity of $F$, since $Y$ is relatively open in $\mathbb C^n\setminus\Gamma$,  there exists an open neighbourhood $O\subset E\setminus S$ of $y$ such that $F(O)\subset Y\cap V$. In particular, pullbacks of functions $g_1,\dots, g_p$ to $O$ by $F$ vanish on $O$. But then for all $i\ge i_0$ (for some $i_0\in\mathbb N$) these pulled back functions vanish at $y_i$ as well, that is, $g_1,\dots, g_p$ vanish at points $x_i$ with $i\ge i_0$. This shows that such $x_i\in Y$, a contradiction proving that there exists an open neighbourhood $O\Subset V$ of $z$ such that $Y\cap O=W\cap O$, i.e., $W\cap O$ is a complex subvariety of $O$.

Next, suppose that $z\in Z$. Then, since each $Z_{j\ell}$ is an open subset of $Z_{j\ell}'$,  there exists an open neighbourhood $V$ of $z$ in $\mathbb C^n\setminus C$ and a set of holomorphic functions $h_1,\dots, h_m$ on $V$ such that
$$
Z\cap V=\{v\in V\, ;\, h_1(v)=\cdots=h_m(v)=0\}.
$$
As before we will prove that there exists an open neighbourhood $O\Subset V$ of $z$ such that $W\cap O=Z\cap O$. This will show that $W\cap O$ is a complex subvariety of $O$. 

For otherwise, we can find open neighbourhoods $O_i\Subset V$ of $z$ with $\cap_{i=1}^\infty O_i=\{z\}$ and points $x_i\in (Y\setminus Z)\cap O_i$. Let $y_i\in F^{-1}(x_i)\cap E$. We may assume without loss of generality that $\{y_i\}$ converges to a point $y\in {\rm cl}(E)$. By definition $F(y)=z\in Z'$ and so $y\in U_{j\ell}'\cap {\rm cl}(E)$ for some $j, \ell$. Since $Z'$ is a relatively open subset of $\mathbb C^n\setminus C$, there exists a (relatively) open neighbourhood $O\subset {\rm cl}(E)\cap U_{j\ell}'$ of $y$ such that $F(O)\subset Y\cap V$.
This implies that pullbacks of functions $h_1,\dots, h_m$ to $O$ by $F$ vanish on $O$ and so
$h_1,\dots, h_k$ vanish at points $x_i$ for all sufficiently large $i$. That is, such $x_i\in Z\cap V\subset Z$, a contradiction.

Combining the considered cases we obtain that $W\, (\subset\mathbb C^n)$ is a one-dimensional complex space.

Further, by the construction of $W$ there exists a domain $D\Subset U$ invariant under conjugation and containing $[a,b]$ such that $F(D)\Subset W$. We define the required complex space $X$ as an irreducible component of $W$ containing $F(D)$. 

If $F|_{U\cap\mathbb R}$ is real-valued, then by our construction, $\bar{W}=W$. Moreover, $(\ \bar{}\,\circ F)(D)=F(\bar{D})=F(D)$. Hence, $\bar{X}$ is an irreducible component of $W$ and $\bar{X}\cap X\supset F(D)$. This shows that $\bar{X}=X$ and completes
the proof of the proposition.
\end{proof}

We retain notation of the previous proposition.
\begin{Lm}\label{lem1}
Let $n: X_n\to X$ be the normalization of $X$. Then there exists a holomorphic map $F_n:D\to X_n$ such that $F=n\circ F_n$. If $F|_{U\cap \mathbb R}$ is real-valued, then there exists an antiholomorphic involution $\tau: X\to X$ such that $F_n\circ\,\bar{}=\tau\circ F_n$.
\end{Lm}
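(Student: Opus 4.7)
My plan is to lift $F$ through the normalization via its universal property, to construct the involution $\tau$ by applying the same device to the antiholomorphic data coming from $\bar X = X$, and to compare the resulting lifts $F_n\circ\,\bar{\,}$ and $\tau\circ F_n$ using Schwarz's reflection principle. (I read the lemma's $\tau:X\to X$ as meaning $\tau:X_n\to X_n$, since otherwise the equality $F_n\circ\,\bar{\,}=\tau\circ F_n$ does not type-check.)

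For the existence of $F_n$, I would use that, since $X$ is irreducible and one-dimensional, $n : X_n \to X$ is a proper finite holomorphic surjection from a connected Riemann surface, and $n$ restricts to a biholomorphism over the complement of the discrete set $\mathrm{Sing}(X)\subset X$. Because $F$ is nonconstant, $F^{-1}(\mathrm{Sing}(X))$ is discrete in the smooth manifold $D$, so the universal property of normalization provides a unique holomorphic lift $F_n : D \to X_n$ with $n \circ F_n = F$. Concretely, one can set $F_n := n^{-1}\circ F$ on $D \setminus F^{-1}(\mathrm{Sing}(X))$, note that the image sits in the relatively compact set $n^{-1}(F(D))\subset X_n$ (using $F(D)\Subset X$ and properness of $n$), and extend across the discrete exceptional set by Riemann's removable singularity theorem.

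For $\tau$, the hypothesis $\bar X = X$ lets me restrict $\,\bar{\,}:\mathbb C^n\to\mathbb C^n$ to an antiholomorphic involution $\sigma: X \to X$. Let $X_n^{c}$ denote the Riemann surface with underlying space $X_n$ but opposite complex structure, so that holomorphic maps out of $X_n^{c}$ correspond precisely to antiholomorphic maps out of $X_n$. Then $\sigma \circ n : X_n^{c} \to X$ is holomorphic and $X_n^{c}$ is normal; the universal property applied to this map yields a unique holomorphic lift $X_n^{c}\to X_n$, i.e., the desired antiholomorphic self-map $\tau$ of $X_n$ satisfying $n\circ\tau = \sigma\circ n$. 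The relation $\tau^{2}=\mathrm{id}_{X_n}$ follows from the uniqueness clause of the universal property applied to the two holomorphic self-maps $\tau^{2}$ and $\mathrm{id}_{X_n}$ of $X_n$, both of which satisfy $n\circ(\cdot)=n$ (using $\sigma^2=\mathrm{id}_X$).

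The identity $F_n\circ\,\bar{\,}=\tau\circ F_n$ is then checked by comparing compositions with $n$. Since $F|_{U\cap\mathbb R}$ is real-valued and $D$ is conjugation-invariant, Schwarz's reflection principle gives $F\circ\,\bar{\,} = \,\bar{\,}\circ F = \sigma \circ F$ on $D$, so
\[
n\circ(F_n\circ\,\bar{\,}) \;=\; F\circ\,\bar{\,} \;=\; \sigma\circ F \;=\; \sigma\circ n\circ F_n \;=\; n\circ(\tau\circ F_n).
\]
Over the open dense subset $D\setminus F^{-1}(\mathrm{Sing}(X))$, where $n$ is a local biholomorphism at every point of the image, the two continuous lifts $F_n\circ\,\bar{\,}$ and $\tau\circ F_n$ must therefore coincide, and by continuity they agree on all of $D$. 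The main obstacle is conceptual rather than computational: one has to translate the antiholomorphic involution into the usual holomorphic language of the normalization's universal property, and passing to the conjugate complex structure $X_n^{c}$ is the device that makes this clean.
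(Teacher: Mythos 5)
Your proposal is correct and follows essentially the same route as the paper: there, too, $F_n$ is obtained from the factorization property of the normalization, $\tau$ is defined as $(n^{-1}\circ\,\bar{}\,\circ n)$ on $X_n\setminus n^{-1}(\mathrm{Sing}(X))$ and extended across that discrete set (the paper uses Riemann's removable singularity theorem after embedding $X_n$ in $\mathbb C^3$, which your universal-property argument on the conjugate surface packages more cleanly), and the identity $F_n\circ\,\bar{}=\tau\circ F_n$ is deduced from $F\circ\,\bar{}=\,\bar{}\,\circ F$. Your reading of $\tau$ as an involution of $X_n$ rather than of $X$ also matches what the paper's own proof actually constructs.
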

\begin{proof}
Existence of $F_n$ follows from the general fact asserting that any holomorphic map of a connected complex manifold into a reduced complex space whose image is not a subset of the locus of non-normal points can be factorized by a holomorphic map into its normalization, see, e.g., \cite[page 32]{BHPV}.

Further, $X_n$ is a connected complex manifold (because $X$ is irreducible and one-dimensional) and $n$ is biholomorphic outside a discrete subset $S\subset X_n$ such that $S':=n(S)$ is the set of singular points of $X$. If $F|_{U\cap\mathbb R}$ is real-valued, then $S'$ is invariant under involution $\,\bar{}\,$ (since $X$ is invariant under it). Therefore we can define an antiholomorphic involution $\tau(z):=(n^{-1}\circ\,\bar{}\,\circ n)(z)$ for $z\in X_n\setminus S$. Since $\tau$ maps relatively compact subsets of $X_n$ to relatively compact (because $n$ and $\,\bar{}\,$ are proper maps) and $X_n$ (being a one-dimensional Stein manifold) admits a proper holomorphic embedding into $\mathbb C^3$, by Riemann's theorem on removable singularities, see, e.g., \cite{GR}, $\tau$ is extendable to an antiholomorphic involution of $X$ (denoted by the same symbol). Finally, property  
$F_n\circ\,\bar{}=\tau\circ F_n$ follows from the property $F\circ\,\bar{}=\,\bar{}\,\circ F$.
\end{proof}

From now on we assume that $F|_{U\cap\mathbb R}$ is real-valued.

Let $p:\widetilde X_n\to X_n$ be the universal covering of $X_n$. We fix a point $o\in p^{-1}(F_n(a))\subset\widetilde X_n$.  Since $D\subset\mathbb C$ is contractible, by the covering homotopy theorem, see, e.g., \cite{Hu}, there exists a unique holomorphic map $\tilde F_n:D\to\widetilde X_n$ such that $F_n=p\circ\tilde F_n$ and $\tilde F_n(a)=o$. Also $\tau: X_n\to X_n$ induces an involutive isomorphism $\tau_*$ of the fundamental group $\pi_1(X_n,F_n(a))$ with the base point $F_n(a)$. Thus by the covering homotopy theorem, there exists a unique antiholomorphic involution $\tilde\tau:\widetilde X_n\to\widetilde X_n$ such that $\tilde\tau(o)=o$ and $p\circ\tilde\tau=\tau\circ p$. Then we have
$$
p\circ\tilde\tau\circ\tilde F_n=\tau\circ F_n=F_n\circ\,\bar{}=p\circ\tilde F_n\circ\,\bar{}\quad\text{and}\quad (\tilde\tau\circ\tilde F_n)(a)=
(\tilde F_n\circ\,\bar{}\,)(a)=o.
$$
From here by the covering homotopy theorem applied to the map $\tau\circ F_n$ we obtain that
$$
\tilde\tau\circ\tilde F_n=\tilde F_n\circ\,\bar{}.
$$
In particular, 
\[
\tilde\tau(z)=z\quad\text{ for each}\quad z\in\tilde F_n(\mathbb R\cap D).
\] 

Let $x\in [a,b]$ be such that $\tilde F_n(x)\ne 0$. 
Since there are nonconstant holomorphic functions on $X$ (and therefore on $X_n$), by the Riemann  mapping theorem $\widetilde X_n$ is biholomorphic to the open unit disk $\mathbb D\subset\mathbb C$. We can choose this biholomorphism such that it sends $o$ to $0\in\mathbb C$ and (making additionally a rotation of $\mathbb D$) $\tilde F_n(x)$ to a point $y\in (0,1)\subset\mathbb D$. In what follows we identify $\widetilde X_n$ with $\mathbb D$ under this biholomorphism. Then $\tilde\tau :\mathbb D\to\mathbb D$ is an antiholomorphic involution preserving points $0$ and $y$. Now, the composite $\,\bar{}\,\circ\tilde\tau:\mathbb D\to\mathbb D$ is a biholomorphic map of $\mathbb D$ preserving $0$ and $y$. Since each such map has a form $z\mapsto e^{i\varphi}\cdot\frac{z-a}{1-\bar{a}z}$ for some $\varphi\in\mathbb R$ and $a\in\mathbb D$, we obtain that $\,\bar{}\,\circ\tilde\tau={\rm id}$,
i.e., 
\[
\tilde\tau \ \text{coincides with the conjugation map}\ \,\bar{}\,.
\]
This implies that $\tilde F_n(\mathbb R\cap D)\subset\mathbb R\cap\mathbb D\, (=(-1,1))$.

\medskip

Let $K:=F_n([a,b])$. Then $K$ is a connected one-dimensional subanalytic subset of $X_n$. In particular, $K$ is arcwise connected and locally simply connected and the fundamental group $\pi_1(K,F_n(a))$ and covering spaces of $K$ are well defined. We will show that $K$ {\em is analytically isomorphic either to a compact interval in $\mathbb R$ or to $\mathbb S$}.

\medskip

Let $\gamma: [0,1]\to X_n$ be a path with origin at $F_n(a)$. Then there exists a unique  path
$\tilde\gamma: [0,1]\to\mathbb D$ such that $\tilde\gamma(0)=0$ and $p\circ\tilde\gamma=\gamma$. Note that $\tilde\tau\circ\tilde\gamma$ is the unique path with the origin at $0$ which covers the path $\tau\circ\gamma: [0,1]\to X_n$. In particular, if $\gamma([0,1])\subset K$, then
$\tilde\tau\circ\tilde\gamma=\tilde\gamma$, that is, $\tilde\gamma([0,1])\Subset (-1,1)$.

Next, the embedding $i: K\hookrightarrow X_n$ induces a homomorphism of fundamental groups $i_*: \pi_1(K,F_n(a))\to \pi_1(X_n,F_n(a))$. 
We will consider two cases:

\medskip

(A) $\text{Range}(i_*)=\{1\}$.

\medskip

In this case the universal covering $p:\mathbb D\to X_n$ admits a holomorphic section $s:N\to\mathbb D$ defined in an open neighbourhood $N\Subset X_n$ of $K$ (because $K$ being compact subanalytic is a deformation retract of some of its open neighbourhoods, see, e.g., \cite[Th.~2]{H}) such that $s(F_n(a))=0$. Since $\tilde F_n(\mathbb R\cap D)\subset\mathbb R\cap\mathbb D$, $s$ maps $K$ into the compact interval
$\tilde F_n([a,b])\subset\mathbb R\cap\mathbb D$.

\medskip

(B) $\text{Range}(i_*)\ne\{1\}$.

\medskip

Since $\pi_1(X_n,F_n(a))$ is a free group (because $X_n$ is a noncompact Riemann surface), by the Nielsen--Schreier theorem $\text{Range}(i_*)$ is free as well. We will show that
$\text{Range}(i_*)\cong\mathbb Z$. 

Let $\gamma: [0,1]\to K$ be a closed path representing a generator $g$ of $\text{Range}(i_*)\subset\pi_1(X_n,F_n(a))$. Then there exists a unique  path
$\tilde\gamma: [0,1]\to\mathbb D$ such that $\tilde\gamma(0)=0$, $p\circ\tilde\gamma=\gamma$ and  $\tilde\gamma([0,1])\subset (-1,1)$. 
Next, recall that the deck transformation group $\pi_1(X_n,F_n(a))$ of the covering $p:\mathbb D\to X_n$ acts discretely on $\mathbb D$ by biholomorphic transformations. From the definition of this action we obtain that $\tilde\gamma(1)=g(0)\in (-1,1)$. The above argument applied to a path representing $g^n$ shows that
$g^n(0)\in (-1,1)$ for all $n\in\mathbb Z$. Suppose that
\[
g(z)= e^{i\varphi}\cdot\frac{z-a}{1-\bar{a}z},\quad z\in\mathbb D,\quad\text{ for some}\ \varphi\in\mathbb R\ \text{ and}\ a\in\mathbb D.
\]
 Then $g^{-1}(0)=a\in (-1,1)$ and $g(0)=-e^{i\varphi}a\in (-1,1)$. This implies that $e^{i\varphi}=1$ and $a$ is real (we used here that the action of $\pi_1(X_n,F_n(a))$ on $\mathbb D$ is free), and so $g$ maps $(-1,1)$ to itself. The group 
$G:=\{g^n\, ; n\in\mathbb Z\}$ acts discretely on $(-1,1)$ and the quotient by this action is a circle $\mathbb S$. This shows that $\text{Range}(i_*)=G\cong\mathbb Z$. (For otherwise, $\text{Range}(i_*)$ contains at least two generators and so the free group $\mathbb F_2$ acts discretely by M\"{o}bius transformations on $(-1,1)$, that is, $\mathbb S$ is an infinite unbranched covering of an analytic manifold, a contradiction.) Next, $K$ is arcwise connected, and hence $p((-1,1))=K$. On the other hand, the quotient space $X_G$ of $\mathbb D$ by the action of $G$ is an unbranched covering of $X_n$ corresponding to subgroup $G\subset\pi_1(X_n,F_n(a))$. By $p_2:\mathbb D\to X_G$ and $p_1:X_G\to X_n$ we denote the covering maps so that $p=p_1\circ p_2$.
Then $p_1: p_2((-1,1))\to K$ is a finite unbranched covering and $(p_1)_*(\pi_1(X_G,0))=G$. This implies that $p_1: p_2((-1,1))\to K$ has degree one, i.e., $p_1$ determines an analytic isomorphism between $p_2((-1,1))\cong\mathbb S$ and $K$.

Let us show that {\em there exists a biholomorphic map between $X_G$ and an annulus $A\subset\mathbb C$ mapping $p_2((-1,1))$ onto $\mathbb S$}.

\medskip

Consider the biholomorphic map $w:\mathbb D\to\mathbb H_+:=\{z\in\mathbb C\, ;\, {\rm Im}(z)>0\}$, $w(z)=\frac{i(z+1)}{1-z}$, $z\in\mathbb D$, mapping $(-1,1)$ onto the positive ray of the $y$-axis. Identifying $\mathbb D$ with $\mathbb H_+$ by $w$ we easily compute that the action of $G$ on $\mathbb H_+$ is given by maps $z\mapsto \left(\frac{1-a}{1+a}\right)^n\cdot z$, $n\in\mathbb Z$, $z\in\mathbb H_+$, where $a$ is the same as in the definition of $g$ above. Let 
\[
E(z):=\exp\left(\frac{2\pi i\cdot ({\rm Log}z-\frac{\pi i}{2} )}{\ln \left(\frac{1-a}{1+a}\right)}\right),\quad z\in\mathbb H_+,
\]
where ${\rm Log}: \mathbb C\setminus (-\infty, 0]\to\mathbb C$ is the principal branch of the logarithmic function. Then $E$ is invariant with respect to the action of $G$ on $\mathbb H_+$ and maps $\mathbb H_+$ onto the annulus 
\[
A:=\left\{z\in\mathbb C\, ;\, -\frac{\pi^2}{\left|\ln \left(\frac{1-a}{1+a}\right)\right|}<\ln |z|<\frac{\pi^2}{\left|\ln \left(\frac{1-a}{1+a}\right)\right|}\right\}
\]
and the positive ray of the imaginary axis onto $\mathbb S$. This shows that $X_G\cong A$ as required.

So we may identify $X_G$ with $A$ and $p_2((-1,1))$ with $\mathbb S\subset A$. Now, there exists an analytic section $\tilde s: K\to A$ of the covering $p_1: A\to X_n$. Since $K$ is a deformation retract of its neighbourhood, say, $N$, section $\tilde s$ is a restriction of a holomorphic section $s : N\to A$ of the covering.

\subsection{} 
\begin{proof}[Proof of Theorem \ref{te1}]
We retain notation of subsection 2.1. Without loss of generality we may assume that $F=(f_1,\dots, f_n)$, where $f_1$ is nonconstant, and $I=[a,b]$. We will consider separately cases (A) and (B) of the previous subsection.

\medskip

(A) $\text{Range}(i_*)=\{1\}$.

\medskip Diminishing domain $D\Subset U$ containing $[a,b]$, if necessary, we may assume without loss of generality that $F_n(D)\Subset N$. Consider the holomorphic map $s\circ F_N: D\to\mathbb D$ such that $(s\circ F_n)([a,b])\subset (-1,1)$. We set
\[
h(z):=\exp\bigl(i\cdot(s\circ F_n)(z)\bigr),\quad z\in D;\qquad G(z):=(n\circ p)\bigl(-i\cdot {\rm Log}(z)\bigr),\quad z\in h(D).
\]
Then $G\circ h=n\circ p\circ s\circ F_n=n\circ F_n=F$ on $D$, $(h([a,b]))$ is a proper subset of $\mathbb S$, and $G$ is injective outside the set
$G^{-1}(S'\cap F(D))$, where $S'$ is the set of singular points of the complex space $X$. Since by our construction $F(D)\Subset X$, set $S'\cap F(D)$ is finite and therefore set $G^{-1}(S'\cap F(D))$ is finite as well because the normalization map $n: X_n\to X_n$ is finite and 
$p: s(N)\to N$ is biholomorphic.

This completes the proof of the first part of the theorem in this case.

\medskip

(B) $\text{Range}(i_*)\ne\{1\}$.

\medskip

As before, we may assume without loss of generality that $F_n(D)\Subset N$. Consider the holomorphic map $s\circ F_N: D\to A$ such that 
$(s\circ F_n)([a,b])=\mathbb S$. We set
\[
h(z):=(s\circ F_n)(z), \quad z\in D;\qquad G(z):=(n\circ p_1)(z),\quad z\in h(D).
\]
One easily shows that these maps satisfy conditions of the theorem.

\medskip

To prove the second part of the theorem, suppose that $F|_{D_1}=G_1\circ h_1$ for some holomorphic maps $h_1:D_1\to\mathbb C$ of a domain $D_1\Subset U$ containing $[a,b]$ and $G_1:h_1(D_1)\to\mathbb C^n$. Then $G_1$ maps open set $h_1(D\cap D_1)$ into the complex space $X$. By the definition the holomorphic map $G^{-1}: F(D)\setminus S'\to h(D)\setminus S$  is well-defined (here $S'$ is the set of singular points of $X$ and $S:=n^{-1}(S')$). Consider the holomorphic map $G^{-1}\circ G_1 : h_{1}(D\cap D_1)\setminus G_1^{-1}(S')\to h(D)$. Since $G_1^{-1}(S')$ is a discrete subset of the domain $h_1(D_1)$ and $h(D)\Subset\mathbb C$, by the Riemann theorem on removal singularities $G^{-1}\circ G_1$ is extended to a holomorphic map $q: h_{1}(D\cap D_1)\to h(D)$. We have
\[
G\circ q\circ h_1=G\circ h\qquad\text{on}\qquad D\cap D_1
\]
and $G$ is injective outside a finite subset of $h(D)$. Thus $q\circ h_1=h$ outside a finite subset of $D\cap D_1$; hence the same identity is valid on $D\cap D_1$ as well.

The proof of the theorem is complete.
\end{proof}

\sect{Proof of Theorem \ref{compos}}
\subsection{} First, we prove the theorem for ${\mathcal F}=\{f_1,\dots, f_n\}$ being a finite set of real analytic functions defined in an open neighbourhood of $[a,b]$. Then all $f_i$ are restrictions to $\mathbb R$ of holomorphic functions defined in an open neighbourhood of $[a,b]$ and we may assume that at least one of $f_i$ is nonconstant (for otherwise, the statement of the theorem is obvious). According to Theorem \ref{te1}, there exist an analytic map $h: (a-\varepsilon,b+\varepsilon)\to\mathbb S$, where $\varepsilon>0$ is sufficiently small, and analytic functions $g_1,\dots, g_n :V\to\mathbb R$, where $V\subset\mathbb S$ is an open neighbourhood of $h\bigl((a-\varepsilon,b+\varepsilon)\bigr)$, such that 
$f_i=g_i\circ h$ for all $1\le i\le n$ and the map $G=(g_1,\dots, g_n): V\to\mathbb R^n$
is injective outside a finite subset of $V$.

To prove the result it suffices to show that condition (2a) of Theorem 2  implies CC for ${\mathcal F}$. (The converse to this statement is a particular case of Theorem 1.) Condition (2a) states that the path $F:[a,b]\to\Gamma$ is closed and represents an element of the commutator subgroup $[\pi_1(\Gamma),\pi_1(\Gamma)]\subset\pi_1(\Gamma)$; here $F=(f_1,\dots, f_n)$ and $\Gamma:=F([a,b])$. We will consider two cases.

\medskip

(1) $h(a)=h(b)$. 

\medskip

Then for the corresponding induced maps in homology we have
\[
0=F_*(\gamma)=(G_*\circ \hat h_*)(\gamma);
\]
here $\gamma$ is a generator of $H_1\bigl([a,b]/(a=b)\bigr)\cong\mathbb Z$, where the space $[a,b]/(a= b)$ is obtained from $[a,b]$ by gluing together points $a$ and $b$ (i.e., it is homeomorphic to $\mathbb S$), and $\hat h: [a,b]/(a=b)\to\mathbb S$ is the map induced by $h|_{[a,b]}$.
By definition $\hat h_*(\gamma)\in H_1(\mathbb S)$ and so $\hat h_*(\gamma)=n\cdot e$ for some $n\in\mathbb Z$, where $e$ is a generator of $H_1(\mathbb S)\cong\mathbb Z$. Hence $n\cdot G_*(e)=0$.

Suppose $n\ne 0$; then $h: [a,b]\to\mathbb S$ is surjective. Since $\Gamma$ is homotopic to a finite one-dimensional CW complex, $H_1(\Gamma)$ is a free abelian group $\mathbb Z^d$, $d\in\mathbb Z_+$. Hence, condition $n\cdot G_*(e)=0$ implies that $G_*(e)=0$. Therefore the image of the path ${\rm id}:\mathbb S\to\mathbb S$ under $G$ represents an element of the commutator subgroup $[\pi_1(\Gamma),\pi_1(\Gamma)]$. This implies that for the map $G\circ {\rm id}:\mathbb S\to\Gamma$ there exists an open arc $\ell\subset\Gamma$ such that $(G\circ {\rm id})^{-1}(\ell)$ is disjoint union of at least two open subarcs of $\mathbb S$. However, $G\circ {\rm id}$ is injective outside a finite subset of $\mathbb S$; a contradiction showing that $\hat h_*(\gamma)=0$.

\noindent Next, $\pi_1(\mathbb S)\cong H_1(\mathbb S)\cong\mathbb Z$ so the latter condition implies that the induced by $h$ path $\hat h: [a,b]/(a=b)\to\mathbb S$ is contractible. Let $p:\mathbb R\to\mathbb S$, $p(x)=e^{2\pi i x}$, $x\in\mathbb R$, be the universal covering of $\mathbb S$. Then by the covering homotopy theorem applied to $h$, there exists an analytic path $\tilde h : (a-\varepsilon,b+\varepsilon)\to\mathbb R$ with $\tilde h(a)=\tilde h(b)$ such that $h=p\circ\tilde h$. We set $\tilde F:=G\circ p$. Then
$F=\tilde F\circ \tilde h|_{[a,b]}$, i.e., the family ${\mathcal F}$ satisfies CC.

\medskip

(2) $h(a)\ne h(b)$.

\medskip

Since $h$ is analytic in an open neighbourhood of $[a,b]$, the set $\{b'\in [a,b]\, ;\, h(b')=h(a)\}$ is finite. By $\tilde b$ we denote its maximal element. Then $h([\tilde b,b])\subset\mathbb S$ is a proper closed arc. 

First, assume that $\tilde b\ne a$. Then $h: [a,b]\to\mathbb S$ is a surjection.

Let $K$ be a compact space obtained from $[a,b]$ by gluing together points $a$, $b$ and $\tilde b$ (a figure-eight space). By $q_1: [a,b]\to K$ we denote the quotient map. Since $F(a)=F(\tilde b)=F(b)$ there exists a continuous map $F_1: K\to\Gamma$ such that $F=F_1\circ q_1$. Also, $q_1$ is the composite of the quotient map $q: [a,b]\to [a,b]/(a=b)\, (\cong\mathbb S)$ and the quotient map
$q_2: [a,b]/(a=b)\to K$ (which identifies point $q(a)=q(b)$ with $q(\tilde b)$). According to our assumption, the path $F_1\circ q_2: [a,b]/(a=b)\to\Gamma$ represents an element of $[\pi_1(\Gamma),\pi_1(\Gamma)]$. Then for the induced maps in homology we have
$((F_1)_*\circ (q_2)_*)(\gamma)=0$, where $\gamma$ is a generator of $H_1\bigl([a,b]/(a=b)\bigr)\cong\mathbb Z$. Note that $(q_2)_*(\gamma)=\delta_1+\delta_2$, where $\delta_i$, $i=1,2$, are generators of $H_1(K)\cong\mathbb Z^2$.

Further, by $L$ we denote the compact space obtained from $\mathbb S$ by gluing together points $h(a)$ and $h(\tilde b)$ (another figure-eight space). By $r:\mathbb S\to L$ we denote the quotient map. Since $G(h(a))=G(h(\tilde b))$ and $h$ is surjective, there exists a continuous surjective map $G_1:L\to\Gamma$ such that $G=G_1\circ r$. Moreover, $G_1$ is injective outside a finite subset of $L$ and, hence, it determines an injective homomorphism of homology groups $(G_1)_*: H_1(L)\to H_1(\Gamma)$. 

Now, the maps $h|_{[a,\tilde b]}$ and $h|_{[\tilde b, b]}$ determine maps $h_1: K_1\to\mathbb S$ and $h_2: K_2\to L$, where $K_1$ and $K_2$ are `circles` forming the figure-eight space $K$, such that
\[
h|_{[a,\tilde b]}=h_1\circ q_1|_{[a,\tilde b]},\qquad r\circ h|_{[\tilde b, b]}=h_2\circ q_1|_{[\tilde b, b]}.
\]
Cycles $K_1$ and $K_2$ (choosing with a proper orientation) represent elements $\delta_1$ and $\delta_2$ in $H_1(K)$. Hence we have
\[
(G_1)_*\bigl((r_*\circ (h_1)_*)(\delta_1)+(h_2)_*(\delta_2)\bigr)=(F_1)_*(\delta_1+\delta_2)=0.
\]
This implies that $(r_*\circ (h_1)_*)(\delta_1)+(h_2)_*(\delta_2)=0\in H_1(L)$. On the other hand, $(r_*\circ (h_1)_*)(\delta_1)=n_1(e_1+e_2)$ and
$(h_2)_*(\delta_2)=n_2 e_2$ for some $n_1,n_2\in\mathbb Z$, where $e_1$ and $e_2$ are generators of $H_1(L)\cong\mathbb Z^2$ corresponding to cycles (`circles`) forming $L$ chosen with appropriate orientations. Therefore, $n_1e_1+(n_1+n_2)e_2=0$, that is, $n_1=n_2=0$.

Finally, since $h([\tilde b,b])\subset\mathbb S$ is a proper closed arc, there exists a path $\tilde h: [\tilde b,b]\to\mathbb R$ such that
$h|_{[\tilde b,b]}=p\circ\tilde h$ and $p$ is one-to-one in an open neighbourhood of the interval $\tilde h([\tilde b,b])$. Let
$g: [\tilde b,b]\to \tilde h([\tilde b,b])$ be the linear function such that $g(\tilde b)=h(\tilde b)$ and $g(b)=h(b)$. Then 
$H(t)=t\cdot\tilde h+(1-t)\cdot g$, $0\le t\le 1$, is a homotopy between $\tilde h$ and $g$ fixing points $h(a)$ and $h(b)$ and such that the range of each $H(t)$ belongs to $\tilde h([\tilde b,b])$. In turn, $r\circ p\circ H$ is a homotopy between closed paths $h_2: K_2\to L$ and
$h_2':K_2\to L$, where $h_2'\circ q_1=r\circ p\circ g$. Thus, by our construction, $0=(h_2)_*(\delta_2)=(h_2')_*(\delta_2)=\pm e_2$, a contradiction.

Second, assume that $\tilde b=a$. Then $h([a,b])\subset\mathbb S$ is a proper closed arc and we apply arguments similar to the above to $[a,b]/(a=b)$ instead of $K$ (here we do not have $\delta_1$) to get a contradiction. We leave the details to the reader.

This shows that case (2) cannot occur and completes the proof of the theorem for the finite family ${\mathcal F}$.

\subsection{} Now, suppose that ${\mathcal F}=\{f_1,\dots, f_n\}$ are complex analytic in an open neighbourhood of $[a,b]$ and such that
$\widehat\Gamma=\Gamma$, where $\Gamma:=F([a,b])$, $F:=(f_1,\dots, f_n)$. We write $f_j=f_j'+if_j''$, where $f_j'$ and $f_j''$ are real and imaginary parts of $f_j$, $1\le j\le n$, and apply Theorem \ref{te1} to the family of real analytic functions $\{f_1',f_2'',\dots, f_n',f_n''\}$.
Thus there exist an analytic map $h: (a-\varepsilon,b+\varepsilon)\to\mathbb S$, where $\varepsilon>0$ is sufficiently small, and analytic functions $g_1',g_2'',\dots, g_n',g_n'' :V\to\mathbb R$, where $V\subset\mathbb S$ is an open neighbourhood of $h\bigl((a-\varepsilon,b+\varepsilon)\bigr)$, such that 
$f_j'=g_j'\circ h$, $f_j''=g_j''\circ h$ for all $1\le j\le n$ and the map $(g_1',g_1'',\dots, g_n',g_n''): V\to\mathbb R^n$
is injective outside a finite subset of $V$. We set $g_j:=g_j'+ ig_j''$, $1\le j\le n$. Then $f_j=g_j\circ h$ and the map
$G:=(g_1,\dots, g_n): V\to \mathbb C$ is injective outside a finite subset of $V$ as well. 

Further, if the path $F:[a,b]\to\Gamma$ is closed and represents an element of the commutator subgroup $[\pi_1(\Gamma),\pi_1(\Gamma)]\subset\pi_1(\Gamma)$, then repeating word-for-word the arguments of the previous subsection we obtain that ${\mathcal F}$ satisfies CC. The converse to this statement follows from Theorem 1.

This completes the proof of the theorem for finite families of analytic functions.

\subsection{} Let us consider the general case. Suppose families ${\mathcal F}_1=\{f_1,\dots, f_{n_1}\}$ and ${\mathcal F}_2=\{f_1,\dots, f_{n_1},\dots , f_{n_2}\}$  of nonconstant analytic functions satisfy hypotheses of Theorem \ref{compos}. We set $F_{\mathcal F_i}=(f_1,\dots, f_{n_i})$, 
$i=1,2$. According to Theorem \ref{te1}, there exist analytic maps $h_{\mathcal F_i}: (a-\varepsilon, b+\varepsilon)\to\mathbb S$ for some $\varepsilon>0$, $G_{\mathcal F_i}: V_{\mathcal F_i}\to \mathbb C^{n_i}$ defined in some open neighbourhoods $V_{\mathcal F_i}\subset\mathbb S$ of $h_{\mathcal F_i}\bigl((a-\varepsilon,b+\varepsilon)\bigr)$, $i=1,2$, and an analytic map $q_{\mathcal F_1}^{\mathcal F_2}: V_{\mathcal F_2}\to V_{\mathcal F_1}$ 
such that 

\medskip

\noindent {\em $F_{\mathcal F_i}=G_{\mathcal F_i}\circ h_{\mathcal F_i}$ and $G_{\mathcal F_i}$ are injective outside finite subsets of  $V_{\mathcal F_i}$, $i=1,2$;}

\medskip  

\noindent {\em
If  $G_{\mathcal F_1}^{\mathcal F_2}=p_{\mathcal F_1}^{\mathcal F_2}\circ G_{\mathcal F_2}$, where  $p_{\mathcal F_1}^{\mathcal F_2}:\mathbb C^{n_2}\to\mathbb C^{n_1}$ is the natural projection onto the first
$n_1$ coordinates, then $h_{\mathcal F_1}=q_{\mathcal F_1}^{\mathcal F_2}\circ h_{\mathcal F_2}$ on  $(a-\varepsilon, b+\varepsilon)$ and  $G_{\mathcal F_1}^{\mathcal F_2}=G_{\mathcal F_1}\circ q_{\mathcal F_1}^{\mathcal F_2}$ on $V_{\mathcal F_2}$.}

\medskip

Making appropriate unitary transformations of $\mathbb C$, we may assume without loss of generality that $h_{\mathcal F_i}(a)=1$, $i=1,2$. Now  results of subsections 3.1--3.2 imply that there exist analytic maps $\tilde h_{\mathcal F_i}: (a-\varepsilon, b+\varepsilon)\to\mathbb R$ such that $\tilde h_{\mathcal F_i}(a)=\tilde h_{\mathcal F_i}(b)=0$ and $h_{\mathcal F_i}=p\circ\tilde h_{\mathcal F_i}$, $i=1,2$. Also, by the covering homotopy theorem, there exists an analytic map $\tilde q_{\mathcal F_1}^{\mathcal F_2}: \tilde V_{\mathcal F_2}\to\tilde V_{\mathcal F_1}$, where $\tilde V_{\mathcal F_i}$ are some open neighbourhoods of $\tilde h_{\mathcal F_i}\bigl( (a-\varepsilon, b+\varepsilon)\bigr)$, $i=1,2$, such that $\tilde q_{\mathcal F_1}^{\mathcal F_2}(0)=0$ and $p\circ \tilde q_{\mathcal F_1}^{\mathcal F_2}=q_{\mathcal F_1}^{\mathcal F_2}\circ p$.
We set $\tilde F_{\mathcal F_i}:= G_{\mathcal F_i}\circ p$, $i=1,2$, and $\tilde F_{\mathcal F_1}^{\mathcal F_2}:=G_{\mathcal F_1}^{\mathcal F_2}\circ p$. Then
\begin{equation}\label{e3.1}
\begin{array}{c}
\displaystyle
F_{\mathcal F_i}=\tilde F_{\mathcal F_i}\circ\tilde h_{\mathcal F_i}\quad\text{on}\quad  (a-\varepsilon, b+\varepsilon),\qquad \tilde h_{\mathcal F_i}(a)=\tilde h_{\mathcal F_i}(b)=0\qquad\text{and}\\
\\
\displaystyle
\tilde F_{\mathcal F_1}^{\mathcal F_2}=\tilde F_{\mathcal F_2}\circ \tilde q_{\mathcal F_1}^{\mathcal F_2}\quad\text{on}\quad \tilde V_{\mathcal F_2},\qquad \tilde h_{\mathcal F_1}=\tilde q_{\mathcal F_1}^{\mathcal F_2}\circ \tilde h_{\mathcal F_2}\quad\text{on}\quad (a-\varepsilon, b+\varepsilon).
\end{array}
\end{equation}

\medskip

Now, suppose that $\mathcal F$ is an infinite family of complex analytic functions defined in open neighbourhoods of the compact interval $[a,b]$ whose finite subfamilies satisfy condition (2a) of Theorem 2. To prove the required result it suffices to show that ${\mathcal F}$ satisfies CC. Without loss of generality we may assume that $\mathcal F$ consists of nonconstant functions. For a finite subset $\mathcal F'\subset\mathcal F$ by 
\[
C_{\mathcal F'}:=\{x\in [a,b]\, ;\, \tilde h_{\mathcal F'}'(x)=0\}
\]
we denote the set of critical points of $\tilde h_{\mathcal F'}$ counted with their multiplicities and by $L_{\mathcal F'}$ the locus of $C_{\mathcal F'}$. Then all $C_{\mathcal F'}<\infty$ and all $L_{\mathcal F'}$ are finite subsets of $[a,b]$. Also, according to \eqref{e3.1} we have
\[
C_{\mathcal F_2}\le C_{\mathcal F_1}\quad\text{and}\quad L_{\mathcal F_2}\subset L_{\mathcal F_1}\quad\text{for}\quad \mathcal F_1\subset\mathcal F_2 .
\]
The set $A$ of all finite subsets of ${\mathcal F}$ is a directed set with the natural order: $\mathcal F_1\le\mathcal F_2$ iff
$\mathcal F_1\subset\mathcal F_2$. Then $\{x_{\alpha}:=(C_{\alpha}, L_{\alpha})\, ;\, C_\alpha\in\mathbb Z_+,\, L_\alpha\subset [a,b]\}_{\alpha\in A}$ is a nonincreasing net. Equipping the set of all compact subsets of $[a,b]$ with the Hausdorff metric (so that it becomes a compact metric space) and $\mathbb Z_+$ with the topology induced from $\mathbb R$ and using the fact that $\{x_\alpha\}_{\alpha\in A}$ is nonincreasing, we find a convergent subnet $\{x_{\alpha_\beta}\}_{\beta\in B}$ of $\{x_\alpha\}_{\alpha\in A}$. Since $\{x_{\alpha_\beta}\}$ is nonincreasing and all $C_{\alpha_{\beta}}<\infty$ and all $L_{\alpha_{\beta}}$ are finite subsets of $[a,b]$, there exists $\beta_0\in B$ such that
\begin{equation}\label{e3.2}
x_{\alpha_\beta}=x_{\alpha_{\beta_0}}\quad\text{for all}\quad \beta\ge\beta_0 .
\end{equation}
We set
\[
x_{\alpha_{\beta_0}}:=(C_{\mathcal F}, L_{\mathcal F}).
\]
Using \eqref{e3.2} and \eqref{e3.1} we obtain 
\[
C_\alpha=C_{\mathcal F}\quad\text{and}\quad L_\alpha=L_{\mathcal F}\quad\text{for all}\quad \alpha\ge \alpha_{\beta_0}.
\]
This implies that nonconstant real analytic functions $\tilde q^{\alpha}_{\alpha_{\beta_0}}$ do not have critical points on $\tilde h_\alpha([a,b])$. In particular, they are invertible on $\tilde h_\alpha([a,b])$. Hence,
\[
\tilde h_\alpha=(\tilde q^{\alpha}_{\alpha_{\beta_0}})^{-1}\circ \tilde h_{\alpha_{\beta_0}}\quad\text{and}\quad F_\alpha=\left(\tilde F_\alpha\circ (\tilde q^{\alpha}_{\alpha_{\beta_0}})^{-1}\right)\circ \tilde h_{\alpha_{\beta_0}}\quad
\text{for all}\quad \alpha\ge \alpha_{\beta_0}.
\]
Thus the family ${\mathcal F}$ satisfies CC with the composition factor $\tilde h_{\alpha_{\beta_0}}$.

The proof of the theorem is complete.

\end{document}